\theoremstyle{plain}
\newtheorem{theorem}{Theorem}
\newtheorem{lemma}{Lemma}
\newtheorem{corollary}{Corollary}
\theoremstyle{definition}
\theoremstyle{remark}
\newcommand{\reals}{\mathbb{R}}
\DeclareMathOperator{\rk}{rk}
\DeclareMathOperator{\cl}{span}
\DeclareMathOperator*{\argmax}{arg\,max}
\DeclareMathOperator*{\pos}{pos}
\title{Incremental Network Design with Minimum Spanning Trees}
\author{Konrad Engel$^1$}
\address{$^1$Institut f\"ur Mathematik, Universit\"at Rostock, 18051 Rostock, Germany} 
\author{Thomas Kalinowski$^2$}
\address{$^2$School of mathematical and Physical Sciences, University of Newcastle, Callaghan, NSW 2308, Australia}
\author{Martin W.P. Savelsbergh$^3$}
\address{$^3$School of Industrial \& Systems Engineering, Georgia Institute of Technology, Atlanta, U.S.A.}
\begin{document}

\begin{abstract}
  Given an edge-weighted graph $G=(V,E)$ and a set $E_0\subset E$, the incremental network design
  problem with minimum spanning trees asks for a sequence of edges $e'_1,\ldots,e'_T\in E\setminus
  E_0$ minimizing $\sum_{t=1}^Tw(X_t)$ where $w(X_t)$ is the weight of a minimum spanning tree $X_t$
  for the subgraph $(V,E_0\cup\{e'_1,\ldots,e'_t\})$ and $T=\lvert E\setminus E_0\rvert$. We prove
  that this problem can be solved by a greedy algorithm.
\end{abstract}

\maketitle

\section{Introduction}\label{sec:intro}

Network planning involves two stages. First, the structure of the network needs to be
decided. Second, the construction of the network needs to be scheduled. The first stage, the network
design stage, has received considerable attention in the operations research literature (see the
survey papers~\cite{kerivin2005design,magnanti1984network} and the references therein). The second
stage, the network construction stage, has received less attention. However, because the
construction of a network often stretches over a long period of time, the sequence in which the
network is constructed is important as it defines when specific parts of the network become
operational. It may even be beneficial to construct temporary links, i.e., links that are not part
of the ultimate network, in order for parts of the network to become operational.

Recently, there has been increased interest in problems that integrate a scheduling component into
network design problems, for instance motivated by the restoration of infrastructure networks after
disruptions~\cite{averbakh2012emergency,averbakh2012flowtime,AverbakhPereira-2015-Networkconstructionproblems,cavdaroglu2013integrating,NurreCavdarogluMitchellSharkeyWallace-2012-Restoringinfrastructuresystems,SharkeyCavdarogluNguyenHolmanMitchellWallace-2015-Interdependentnetworkrestoration}. A
classification of such \emph{integrated network design and scheduling} problems, with respect to underlying
scheduling environments and network performance measures was presented
in~\cite{NurreSharkey-2014-Integratednetworkdesign} along with complexity results and heuristic
algorithms.

One of the most basic variants of the framework from~\cite{NurreSharkey-2014-Integratednetworkdesign}
was studied under the name \emph{incremental network design problem}
in~\cite{BaxterElgindyErnstKalinowskiSavelsbergh-2014-Incrementalnetworkdesign}, in order to gain
insights into the trade-offs between construction cost and operational benefit. More specifically, an incremental network design problem can be associated
with any network optimization problem $P$, e.g., finding a shortest path, finding a maximum flow,
etc. In the most basic version, in addition to the network optimization problem $P$, an instance is
given by a network $G=(V,E)$ with vertex set $V$ and edge set $E$ and an existing edge set
$E_0$. The edge set $E\setminus E_0$ is referred to as the potential edge set and its cardinality
$T=\lvert E\setminus E_0\rvert$ as the planning horizon. Let $\varphi_P(G)$ denote the value of an
optimal solution to network optimization problem $P$ on network $G$. We are seeking a sequence
$E_0\subset E_1\subset\cdots\subset E_T=E$ with $\lvert E_i\setminus E_{i-1}\rvert=1$ giving rise to
networks $G_0,\,G_1,\ldots,G_T=G$, such that $\sum_{t=1}^T\varphi_P(G_t)$ is minimum (assuming that
$P$ is a minimization problem). That is, in the basic version, a single edge can be built in each
period of the planning horizon and we are seeking to minimize the operational costs over the
planning horizon.

This setting should be considered as a first purely mathematical step towards real-world
applications. In more elaborate versions, a construction cost may be associated with building a
potential edge and a budget may be available in each period, and the objective is to minimize the
operational costs over the planning horizon subject to the constraint that the construction cost of
the set of potential edges built in a period does not exceed the budget in that period.

Two natural heuristics for incremental network design problems, {\bf quickest-improvement} and {\bf
  quickest-to-ultimate}, are also of interest.  Quickest-improvement always seeks to improve the
value of the solution to the network optimization as quickly as possible, i.e., by adding as few
potential edges to the network as possible. A description of quickest-improvement can be found in
Algorithm~\ref{alg:qi}.
\begin{algorithm}[htb]
\caption{{\bf quickest-improvement}}\label{alg:qi}
\begin{tabbing}
....\=....\=....\=....\=................... \kill \\
$i \leftarrow 0$ ; $E' \leftarrow E_0$ \\
\textbf{while} $\varphi_P(G_{E'}) > \varphi_P(G_{E}) $ \textbf{do}  \\
\> $ k \leftarrow \min\left\{\lvert E'' \rvert\quad :\quad E'' \subseteq E \setminus E',\  \varphi_P(G_{E'})-\varphi_P(G_{E' \cup E''}) > 0 \right\}  $ \\
\> $ i \leftarrow i+1$; $E_i \leftarrow \argmax\left\{ \varphi_P(G_{E'}) - \varphi_P(G_{E' \cup E''})\quad :\quad  E'' \subseteq E \setminus E',\ \lvert E'' \rvert = k\right\} $ \\
\> $ E' \leftarrow E' \cup E_i$ \\
return ($E_1, \ldots, E_i, E \setminus \bigcup_{j=0}^i E_j$)
\end{tabbing}
\end{algorithm}

Quickest-to-ultimate first finds an optimal solution to the network optimization on the complete
network, referred to as an {\it ultimate} solution, and then always seeks to improve the value of
the solution to the network optimization as quickly as possible, but choosing only potential edges
that are part of the ultimate solution. A description of quickest-to-ultimate can be found in
Algorithm~\ref{alg:qtu}.
\begin{algorithm}[htb]
\caption{{\bf quickest-to-ultimate}}\label{alg:qtu}
\begin{tabbing}
....\=....\=....\=....\=................... \kill \\
Let $\overline{E}\subseteq E$ be a set of minimum cardinality such that $\varphi_P\left(\overline{E}\right)=\varphi_P(E)$\\
$i \leftarrow 0$ ; $E' \leftarrow E_0$ \\
\textbf{while} $\varphi_P(G_{E'}) > \varphi_P(G_{E}) $ \textbf{do}  \\
\> $ k \leftarrow \min\left\{ \lvert E'' \rvert\quad :\quad E'' \in \overline{E} \setminus E',\ \varphi_P(G_{E'}) - \varphi_P(G_{E' \cup E''}) > 0\right\} $ \\
\> $ i \leftarrow i+1$; $E_i \leftarrow \argmax\left\{ \varphi_P(G_{E'}) - \varphi_P(G_{E' \cup E''})\quad :\quad E'' \subseteq \overline{E} \setminus E',\  \lvert E'' \rvert = k  \right\} $ \\
\> $ E' \leftarrow E' \cup E_i$ \\
return ($E_1, \ldots, E_i, E \setminus \bigcup_{j=0}^i E_j$)
\end{tabbing}
\end{algorithm}

Incremental network design problems have been studied for the $s$-$t$ shortest path
problem~\cite{BaxterElgindyErnstKalinowskiSavelsbergh-2014-Incrementalnetworkdesign} and for the
maximum flow problem~\cite{KalinowskiMatsypuraSavelsbergh-2015-Incrementalnetworkdesign}. In both
cases, it was found that even the basic version of the incremental network design problem is
NP-complete. For the natural heuristics described above it has been shown that for the shortest path
problem, neither yields a constant factor approximation algorithm, but that for the maximum flow
problem with the additional restriction that all arcs have unit capacity, quickest-to-ultimate
yields a 2-approximation algorithm and quickest-improvement yields a $3/2$-approximation algorithm.

These results have raised two questions: (1) Does there exist a network optimization problem for
which the incremental design problem is polynomially solvable? (2) Does there exist a network
optimization problem for which either quickest-to-ultimate or quickest-improvement solves the
incremental design problem optimally?

In this paper, we answer both questions in the affirmative. We show that the basic version of the
incremental network design problem with minimum spanning trees is solved by both
quickest-improvement and quickest-to-ultimate. Note that in a slightly more general setting the
network design over time for minimum spanning trees is NP-complete. For instance, in the variant
considered in~\cite{NurreSharkey-2014-Integratednetworkdesign}, the construction of an edge can take
multiple time periods and the objective is to minimize a weighted sum of of the weights of minimum
spanning trees. This problem is proved to be NP-complete by a reduction from
\textsc{Partition}.

The \emph{incremental network design problem with minimum spanning trees} (IND-MST) is defined as
follows. For a given graph $G=(V, E)$, a weight function $w:E\to\reals$, and a set of existing edges
$E_0$, such that the subgraph $G=(V,E_0)$ is connected, find a sequence $X_0,\,X_1,\ldots,\,X_T$ of
spanning trees which minimizes the sum of the weights $w(X_0)+\cdots+w(X_T)$ subject to the
condition that $X_0\subseteq E_0$ and $\left\lvert X_i\cap(E\setminus(E_0\cup X_1\cup\cdots\cup
  X_{i-1}))\right\rvert\leqslant 1$ for $1\leqslant i\leqslant T$, i.e., at most one edge from
$E\setminus E_0$ might be added in each step. This has some similarity with the problem of
maintaining a dynamic minimum spanning tree while the network data changes~\cite{chin1978algorithms,
  eppstein1994offline, eppstein1997sparsification, frederickson1985data, spira1975finding}.  In
contrast to these dynamic minimum spanning tree problems, in our setting the network changes are not
given as input, but are part of the decisions to be made. We will show that IND-MST can be solved by
a greedy algorithm. This is a consequence of the corresponding result for the incremental matroid
design problem with minimum weight matroid bases, which is stated in Section~\ref{sec:min_basis} and
proved in Section~\ref{sec:proof}.

\section{Incremental matroid design}\label{sec:min_basis}

Let $M=(E,\mathcal I)$ be a matroid of rank $r$, where $E$ is the ground set, and $\mathcal
I\subseteq 2^E$ is the collection of independent sets. We follow the notation of
Schrijver~\cite{Schrijver-2003-Combinatorialoptimizationpolyhedra}: the rank of a matroid $M$ is denoted by rk($M$),
minimal dependent sets are called \emph{circuits}, for $A\subset E$ and $e\in E$ we write $A + e = A
\cup \{e\}$ and $A - e = A \setminus \{e\}$, and we denote the closure of a set $A\subseteq E$ by
$\cl(A)$:
\[\cl(A)=\{e\in E\ :\ \rk(A+e)=\rk(A)\}.\]
For a positive integer $n$, let $[n]$ denote the set $\{1,\ldots,n\}$. An important tool
in our arguments is the following \emph{strong exchange property} which was first proved by
Brualdi~\cite{brualdi1969comments}.
\begin{description}
\item[Strong exchange property.] If $X$ and $Y$ are bases of a matroid $M$ and $e\in X$, then there
  exists an element $e'\in Y$ such that $X-e+e'$ and $Y-e'+e$ are bases of $M$.
\end{description}
As additional input, we are given a weight function $w:E\to\reals$ and a subset $E_0\subset E$ such
that $E_0$ contains a basis of $M$. The weight function $w$ can be naturally extended to the power
set of $E$ by setting $w(X)=\sum_{e\in X}w(e)$ for all $X\subseteq E$. We define a function
$f:2^{E\setminus E_0}\to\reals$ by
\[f(A)=\min\{w(X)\ :\ X\subseteq E_0\cup A\text{ is a basis of }M\}\qquad\text{for }A\subseteq
E\setminus E_0.\] The \emph{incremental matroid design problem with minimum weight bases} (IMD-MWB)
problem for the time horizon $T=\lvert E\setminus E_0\rvert$ is the following optimization problem:
\begin{equation}\label{eq:imst}
\min\left\{\sum_{i=0}^T f(A_i)\ :\ A_0=\emptyset,\ \lvert A_i\setminus A_{i-1}\rvert=1\text{ for }i\in[T]\right\}.
\end{equation}
For a basis $X$, a pair $(e,e')\in X\times(E\setminus X)$ is called an \emph{exchange pair for $X$}
if $X-e+e'$ is another basis. It is called an \emph{optimal} exchange pair if $w(e)-w(e')$ is
maximum.  Algorithm~\ref{alg:greedy_1} is a natural greedy strategy for solving IMD-MWB, where the
output defines the sets $A_i$ in~(\ref{eq:imst}) via $A_i=\{e'_1,\ldots,e'_i\}$ for $i\leqslant k$
and $A_i=A_{i-1}+e'$ for arbitrary $e'\in E\setminus(E_0\cup A_{i-1})$ for $k+1\leqslant i\leqslant
T$. This corresponds to using quickest-improvement.
\begin{algorithm}[htb]
\caption{Greedy algorithm for the incremental minimum weight basis problem}\label{alg:greedy_1}
\begin{tabbing}
....\=....\=....\=....\=................... \kill \\
$k\leftarrow 0$\\
$w^*\leftarrow$ minimum weight of a basis of $M$\\
$X\leftarrow$ any minimum weight basis of the submatroid induced by $E_0$ (which is also a basis of $M$)\\
\textbf{while} $w(X) > w^*$ \textbf{do}\\
\> $k\leftarrow k+1$\\
\> $(e,e')\leftarrow$ an optimal exchange pair  for $X$\\
\> $X\leftarrow X-e+e'$\\
\> $e'_k\leftarrow e'$\\
return $(e'_1,\ldots,e'_k)$
\end{tabbing}
\end{algorithm}

Our main result is a consequence of the following theorem.
\begin{theorem}\label{thm:IMWB}
  Algorithm~\ref{alg:greedy_1} finds an optimal solution for the problem IMD-MWB.
\end{theorem}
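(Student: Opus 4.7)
The plan is to identify the pointwise lower bound
\[
g(i) := \min\{w(X) : X \text{ is a basis of } M,\ \lvert X\setminus E_0\rvert \leqslant i\}
\]
and show that greedy attains $f(A_i^{\text{greedy}}) = g(i)$ for every $i$. Any feasible chain $A_0 \subset A_1 \subset \cdots \subset A_T$ trivially satisfies $f(A_i) \geqslant g(i)$, since $A_i$ is a particular $i$-element subset of $E\setminus E_0$; so pointwise attainment immediately yields optimality of Algorithm~\ref{alg:greedy_1}.

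I would prove by induction on $i$ that the greedy basis $X_i$ coincides with the $i$-th basis $X^{(i)}$ of the parametric family obtained by minimising the perturbed weight
\[
w_\lambda(e) := w(e) + \lambda\cdot\mathbf{1}[e\in E\setminus E_0],\qquad \lambda\geqslant 0,
\]
as $\lambda$ decreases from $+\infty$ to $0$. Classical parametric matroid theory says this family has finitely many breakpoints $\lambda_1 > \cdots > \lambda_k > 0$; that the transition $X^{(i-1)} \to X^{(i)}$ at $\lambda_i$ consists of a single exchange in which some $e\in E_0$ is replaced by some $e'\in E\setminus E_0$ with $w(e)-w(e') = \lambda_i$ maximal over all such swaps from $X^{(i-1)}$; and that $w(X^{(i)}) = g(i)$ by Lagrangian duality.

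The inductive step then amounts to matching greedy's step $i$ to the parametric swap at $\lambda_i$. Assuming $X_{i-1} = X^{(i-1)}$, so that $A_{i-1} = X_{i-1}\cap(E\setminus E_0)$, I would verify two facts: (a) a positive-decrease exchange pair $(e,e')$ for $X_{i-1}$ must satisfy $e' \in E \setminus E_0 \setminus A_{i-1}$, because $X_{i-1}$ is already the minimum-weight basis inside $E_0\cup A_{i-1}$; and (b) such a pair cannot be an ``undo'' $e = e'_j$ with $j<i$, since then $X_{i-1} - e'_j + e'$ would be a basis with $\lvert\cdot\setminus E_0\rvert = i-1$ and weight strictly below $w(X_{i-1}) = g(i-1)$, contradicting the inductive hypothesis. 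Combining (a) and (b), greedy's candidate pool at step $i$ coincides with the parametric pool $\{(e,e') : e\in E_0\cap X_{i-1},\ e'\in E\setminus E_0\setminus X_{i-1},\ X_{i-1}-e+e'\text{ is a basis}\}$, so maximising $w(e)-w(e')$ over this common pool selects precisely the parametric swap.

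I expect the main technical obstacle to be claim (b), together with the assertion that the parametric transitions really are clean single-element swaps (where the strong exchange property quoted in the paper is invoked to handle weight ties). Once these are in place, $w(X_i)=g(i)$ for every $i$, and summing gives $\sum_i f(A_i^{\text{greedy}}) = \sum_i g(i) \leqslant \sum_i f(A_i)$ for any chain, proving the theorem.
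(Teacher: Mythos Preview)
Your parametric/Lagrangian route is genuinely different from the paper's. The paper never introduces $w_\lambda$; instead it proves a direct \emph{extension lemma} (Lemma~3): for any $A$ with $\lvert A\rvert=k$, $f(A)=F_k$ and any $B$ with $\lvert B\rvert=k+1$, $f(B)=F_{k+1}$, there exists $e\in B\setminus A$ with $f(A+e)=F_{k+1}$. The proof builds a bipartite digraph on two carefully chosen subsets of minimum bases $X_A$ and $X_B$, shows every vertex has positive indegree, extracts a directed cycle, and reads off the required exchange. Your $g(i)$ equals the paper's $F_i$, and your parametric picture nicely explains \emph{why} $g$ is convex and identifies the increments $g(i-1)-g(i)$ as the breakpoints $\lambda_i$; that is a genuine conceptual gain.

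However, the induction hypothesis ``$X_{i-1}=X^{(i-1)}$'' cannot be maintained. Already at $i=0$ Algorithm~\ref{alg:greedy_1} may pick a different minimum-weight basis of $E_0$ than the parametric one, and at any later step a tie among optimal exchange pairs lets greedy diverge from the specific parametric path. The viable invariant is only $w(X_{i-1})=g(i-1)$ together with $\lvert X_{i-1}\setminus E_0\rvert=i-1$. Under that weaker hypothesis your arguments (a) and (b) still go through and yield $\lvert X_i\setminus E_0\rvert=i$, hence $w(X_i)\geqslant g(i)$; but for $w(X_i)\leqslant g(i)$ you now need: from \emph{every} basis $X$ with $w(X)=g(i-1)$ and $\lvert X\setminus E_0\rvert=i-1$ there is a single swap to weight $g(i)$. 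Parametric theory hands you this only for the particular $X^{(i-1)}$ on the path, not for an arbitrary such $X$. One clean fix is Brualdi's bijective exchange between $X_{i-1}$ and $X^{(i)}$ (both $w_{\lambda_i}$-optimal, so the bijection is $w_{\lambda_i}$-weight-preserving), followed by the counting observation $\lvert(X_{i-1}\setminus X^{(i)})\cap E_0\rvert>\lvert(X^{(i)}\setminus X_{i-1})\cap E_0\rvert$ to locate a matched pair $e\in E_0$, $\sigma(e)\notin E_0$. But this step is exactly the content of the paper's Lemma~3 in different clothing, so the parametric viewpoint does not let you bypass the core exchange argument---you still have to prove it.
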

If the second component $e'$ of an exchange pair $(e,e')$ for $X$ belongs to $Y\setminus X$, where
$Y \subseteq E$, then we call such a pair an \emph{exchange pair for $(X,Y)$}. Before proving
Theorem~\ref{thm:IMWB} we observe that the search for an optimal exchange pair can be restricted to
exchange pairs $(e,e')$ for $(X,Y)$, where $Y$ is a \textit{fixed} minimum weight basis of $M$.
This corresponds to using quickest-to-ultimate and leads to Algorithm~\ref{alg:greedy_2}.
\begin{algorithm}[htb]
\caption{Simplified greedy algorithm}\label{alg:greedy_2}
\begin{tabbing}
....\=....\=....\=....\=................... \kill \\
$X\leftarrow$ any minimum weight basis of the submatroid induced by $E_0$\\
$Y\leftarrow$ any minimum weight basis of the matroid $M$\\
\textbf{for} $k=1,\ldots,\lvert Y\setminus X\rvert$ \textbf{do}\\
\> $(e,e')\leftarrow$ an optimal exchange pair for $(X,Y)$\\
\> $X\leftarrow X-e+e'$\\
\> $e'_k\leftarrow e'$\\
return $(e'_1,\ldots,e'_k)$
\end{tabbing}
\end{algorithm}
\begin{corollary}\label{cor:IMWB_simplified}
Algorithm~\ref{alg:greedy_2} finds an optimal solution for the problem IMD-MWB.
\end{corollary}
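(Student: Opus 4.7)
The plan is to deduce Corollary~\ref{cor:IMWB_simplified} directly from Theorem~\ref{thm:IMWB} by establishing the observation stated just before it: whenever $w(X)>w^{*}$, any optimal exchange pair for $X$ can be replaced by one whose second component lies in $Y\setminus X$. Once this is known, the choice made in each iteration of Algorithm~\ref{alg:greedy_2} realises an optimal gain, so Algorithm~\ref{alg:greedy_2} is a legitimate execution of Algorithm~\ref{alg:greedy_1} and the optimality of the resulting sum $\sum_i f(A_i)$ follows from Theorem~\ref{thm:IMWB}.

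To prove the observation, I would take an arbitrary optimal exchange pair $(e^{*},e'^{*})$ for $X$ and, assuming $e'^{*}\notin Y$, apply the strong exchange property to the bases $X^{*}=X-e^{*}+e'^{*}$ and $Y$ at the element $e'^{*}\in X^{*}\setminus Y$. This produces $f\in Y$ such that both $X^{*}-e'^{*}+f=X-e^{*}+f$ and $Y-f+e'^{*}$ are bases. Minimality of $w(Y)$ applied to the second basis yields $w(f)\leqslant w(e'^{*})$, and hence the pair $(e^{*},f)$ is an exchange pair for $X$ whose gain $w(e^{*})-w(f)$ is at least $w(e^{*})-w(e'^{*})$; by the optimality of the latter the two gains coincide, so $(e^{*},f)$ is itself optimal.

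The main obstacle is ruling out the degenerate case $f=e^{*}$, since only then can we guarantee $f\in Y\setminus X$ and that $(e^{*},f)$ is a genuine exchange pair. This is where the hypothesis $w(X)>w^{*}$ is essential: it forces the optimal gain $w(e^{*})-w(e'^{*})$ to be strictly positive, so $w(e^{*})>w(e'^{*})$, whereas $f=e^{*}$ combined with $w(f)\leqslant w(e'^{*})$ would yield $w(e^{*})\leqslant w(e'^{*})$, a contradiction. Once $f\neq e^{*}$ is established, the relations $f\in Y\setminus X^{*}$ and $f\neq e'^{*}$ (because $e'^{*}\notin Y$) place $f$ in $Y\setminus X$ as required. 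The corollary is then completed by matching the gain of Algorithm~\ref{alg:greedy_2} at each iteration with that of an optimal execution of Algorithm~\ref{alg:greedy_1}, and noting that after $|Y\setminus X|$ iterations the set of added elements contains enough of $Y$ that $f$ has reached $w^{*}$, so any completion of the sequence preserves the optimal objective value.
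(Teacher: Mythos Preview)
Your proof is correct and follows essentially the same strong-exchange argument as the paper's: apply strong exchange to $X-e^{*}+e'^{*}$ and $Y$, then use minimality of $Y$ to compare weights. Your version is in fact more careful than the paper's, which states the auxiliary claim ``for any basis $X$'' (this fails when $X=Y$) and silently passes over the possibility that the exchanged element equals $e^{*}$; your restriction to the case $w(X)>w^{*}$ and your explicit exclusion of $f=e^{*}$ close these small gaps.
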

\begin{proof}
  This follows from the claim that for any basis $X$ of $M$, and any minimum weight basis $Y$, there
  is an optimal exchange pair $(e,e')$ for $X$ with $e'\in Y$. Suppose the claim is false and let
  $(e,e')$ be an optimal exchange pair for $X$. By the strong exchange property applied to the bases
  $X'=X-e+e'$ and $Y$ and the element $e'\in X'$, there exists an $e''\in Y$ such that
  $X'-e'+e''=X-e+e''$ and $Y-e''+e'$ are bases. Our assumption implies $w(e'')>w(e')$, while from
  the minimality of $Y$ it follows that $w(e'')\leqslant w(e')$.
\end{proof}

\section{Proof of Theorem~\ref{thm:IMWB}}\label{sec:proof}
The following lemma is stated in~\cite{eppstein1994offline} for graphical matroids. The argument
works in general, and in order to make our presentation self-contained we include the short proof.
\begin{lemma}\label{lem:eppstein}
  Let $M=(E,\mathcal I)$ be a matroid, $E_0\subseteq E$, $A\subseteq E\setminus E_0$. In addition,
  let $M_0=(E_0,\mathcal I_0)$ and $M_A=(E_0\cup A,\mathcal I_A)$ be the matroids induced by $E_0$
  and $E_0\cup A$, respectively, i.e., $\mathcal I_0=\{X\cap E_0\ :\ X\in\mathcal I\}$ and $\mathcal
  I_A=\{X\cap (E_0\cup A)\ :\ X\in\mathcal I\}$, and let $X$ be a minimum weight basis for the
  matroid $M_0$. Then there exists a minimum weight basis $Y$ of $M_A$ such that $Y\subseteq X\cup A$.
\end{lemma}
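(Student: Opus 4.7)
The plan is an exchange argument. Among all minimum weight bases of $M_A$, pick one, call it $Y$, that maximizes $\lvert Y\cap X\rvert$, and show that this $Y$ must satisfy $Y\subseteq X\cup A$. If not, there exists some $e\in Y\cap(E_0\setminus X)$, and the goal is to replace $e$ in $Y$ by a suitable $f\in X$ without increasing the weight, thereby contradicting the choice of $Y$.

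To produce such an $f$, I would combine two standard structural objects coming from the two different matroids. On the $M_0$ side, since $X$ is a basis of $M_0$ and $e\in E_0\setminus X$, the set $X+e$ contains a unique circuit $C$ of $M_0$ with $e\in C$ and $C-e\subseteq X$; because $M_0$ is the restriction of $M$ to $E_0$, this $C$ is also a circuit of $M_A$. On the $M_A$ side, let $D$ be the fundamental cocircuit of $Y$ at $e$ in $M_A$, so $e\in D$, $D\cap Y=\{e\}$, and for every $f\in D-e$ the set $Y-e+f$ is a basis of $M_A$. By the standard circuit-cocircuit orthogonality in $M_A$, $\lvert C\cap D\rvert\neq 1$, and since $e$ lies in both, there is an $f\in(C\cap D)-e$. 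This $f$ lies in $C-e\subseteq X$ and makes $Y-e+f$ a basis of $M_A$.

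For the weight comparison, an elementary exchange along the circuit $C$ shows that $X-f+e$ is again a basis of $M_0$, and the minimum-weight property of $X$ in $M_0$ yields $w(e)\geq w(f)$. Hence $w(Y-e+f)\leq w(Y)$, so $Y-e+f$ is again a minimum weight basis of $M_A$; but $f\in X\setminus Y$ and $e\notin X$, so $\lvert(Y-e+f)\cap X\rvert=\lvert Y\cap X\rvert+1$, contradicting the maximality of $\lvert Y\cap X\rvert$. The one subtle point, and what I expect to be the main obstacle, is the interplay between the two matroids $M_0$ and $M_A$: the weight optimality of $X$ is only available inside $M_0$, while the exchange of $e$ in $Y$ happens in $M_A$. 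The bridge is the fact that circuits of $M_0$ are also circuits of $M_A$, which is what allows the fundamental circuit of $M_0$ and the fundamental cocircuit of $M_A$ to be confronted in a single matroid via orthogonality, so that both the basis exchange in $M_A$ and the weight inequality from $M_0$ become available simultaneously.
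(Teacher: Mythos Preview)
Your argument is correct. The circuit--cocircuit orthogonality step is exactly the right bridge: the fundamental circuit $C$ of $e$ over $X$ lives entirely in $E_0$, hence is a circuit of $M$ and therefore of $M_A$, so it can be intersected with the fundamental cocircuit $D$ of $e$ in $Y$ (taken in $M_A$). The resulting $f\in(C\cap D)-e$ simultaneously gives the $M_A$-basis exchange $Y-e+f$ and the $M_0$-basis exchange $X-f+e$, and the latter supplies $w(f)\leqslant w(e)$. The extremal choice of $Y$ then finishes it.

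The paper takes a different, incremental route. It first isolates a one-step update lemma (Lemma~\ref{lem:basis_update}): if $X_A$ is a minimum weight basis of $M_A$ and one new element $e$ is added, then deleting a heaviest element of the unique circuit in $X_A+e$ yields a minimum weight basis of $M_{A+e}$. Lemma~\ref{lem:eppstein} then follows by adding the elements of $A$ one at a time. Your proof is more self-contained and avoids the auxiliary lemma, using instead a single extremal/orthogonality argument; the paper's version, on the other hand, makes the single-element update explicit, which fits the incremental theme of the paper and is closer in spirit to how the greedy algorithm actually proceeds. Both arguments are standard; yours is slightly slicker for the bare statement, while the paper's decomposition yields a reusable building block.
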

Lemma~\ref{lem:eppstein} is proved by iterating the next lemma which states that a single element
exchange is sufficient in order to update the minimum weight basis after one potential element is
added.
\begin{lemma}\label{lem:basis_update}
  Let $A\subseteq E\setminus E_0$, and let $X_A$ be a minimum weight basis for the matroid $M_A$
  induced by $E_0\cup A$. Then, for every $e\in E\setminus(E_0\cup A)$, the set $X_A+e-e'$ is a minimum
  weight basis for the matroid $M_{A+e}$ induced by $E_0\cup A+e$, where $e'$ is an element of
  maximum weight in the circuit of $X_A+e$.
\end{lemma}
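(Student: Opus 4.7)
My plan is to use the classical characterization of minimum weight matroid bases: a basis $X$ of a matroid $N$ is of minimum weight if and only if, for every $f\in N\setminus X$ and every $g\in C_N(X,f)\setminus\{f\}$, we have $w(f)\geqslant w(g)$, where $C_N(X,f)$ denotes the unique circuit in $X+f$. I would first verify that $X_A+e-e'$ is a basis of $M_{A+e}$, then check this characterization for every fundamental circuit.

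For the basis property: since $E_0$ contains a basis of $M$, we have $\rk(M_A)=\rk(M_{A+e})=\rk(M)$, so $X_A$ is already a basis of $M_{A+e}$ and $X_A+e$ contains a unique circuit $C$; removing any element of $C$ (in particular the max-weight element $e'$) yields a basis of $M_{A+e}$. Note also that $e\in C$, so if $e$ itself is max-weight, then $e'=e$ and $X_A+e-e'=X_A$, still a basis.

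Next I would check the min-weight characterization for $X_A+e-e'$ in $M_{A+e}$. Pick any $f\in(E_0\cup A+e)\setminus(X_A+e-e')$. The easy case is $f=e'$: then $C(X_A+e-e',e')=C$ and $e'$ is maximum in $C$ by choice, so the condition holds vacuously (all $g\neq e'$ in $C$ satisfy $w(g)\leqslant w(e')$). The main case is $f\in(E_0\cup A)\setminus X_A$, where I would compare the fundamental circuit $C(X_A+e-e',f)$ in $M_{A+e}$ with the fundamental circuit $C(X_A,f)$ in $M_A$. If $e'\notin C(X_A,f)$, then $C(X_A,f)\subseteq X_A+e-e'+f$, so $C(X_A+e-e',f)=C(X_A,f)$, and the bound $w(f)\geqslant w(g)$ follows from the minimality of $X_A$ in $M_A$.

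The main obstacle is the remaining subcase $e'\in C(X_A,f)$, where the two fundamental circuits genuinely differ. I would resolve it with the circuit elimination axiom applied to the two circuits $C$ and $C(X_A,f)$ at their common element $e'$: this produces a circuit $C_3\subseteq(C\cup C(X_A,f))\setminus\{e'\}\subseteq X_A+e-e'+f$ containing $f$, so necessarily $C_3=C(X_A+e-e',f)$. For any $g\in C_3\setminus\{f\}$ we either have $g\in C(X_A,f)\setminus\{e',f\}$, giving $w(g)\leqslant w(f)$ directly from the minimality of $X_A$, or $g\in C\setminus\{e'\}$, giving $w(g)\leqslant w(e')$ by choice of $e'$; in the latter case we also use $e'\in C(X_A,f)$, which forces $w(e')\leqslant w(f)$, so again $w(g)\leqslant w(f)$. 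This covers all circuits, and the characterization then certifies that $X_A+e-e'$ is a minimum weight basis of $M_{A+e}$.
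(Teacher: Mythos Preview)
Your proof is correct but follows a genuinely different route from the paper's. The paper argues by contradiction using the strong exchange property: if some basis $Y$ of $M_{A+e}$ were lighter than $X_A+e-e'$, then $e\in Y$, and strong exchange for the pair $(X_A+e-e',Y)$ at $e$ produces an $e''\in X_A$ with both $X_A+e-e''$ and $Y-e+e''$ bases; maximality of $e'$ in the circuit forces $w(e'')\leqslant w(e')$, while minimality of $X_A$ in $M_A$ applied to $Y-e+e''$ forces $w(e'')>w(e')$, a contradiction. This is short and leans on Brualdi's strong exchange theorem, which the paper has already introduced for later use.

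Your argument instead verifies the fundamental-circuit characterization of minimum weight bases directly, with the key step being circuit elimination on $C$ and $C(X_A,f)$ at $e'$. This is more elementary in that it uses only the basic circuit axioms rather than strong exchange, at the price of a longer case analysis. One small point worth making explicit: when you invoke circuit elimination, the basic axiom only guarantees \emph{some} circuit $C_3\subseteq (C\cup C(X_A,f))\setminus\{e'\}$, not one containing $f$; you recover $f\in C_3$ afterwards from the fact that $C_3\subseteq X_A+e-e'+f$ and $X_A+e-e'$ is independent, which then forces $C_3$ to be the fundamental circuit of $f$. With that clarification, the argument is complete.
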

\begin{proof}
  Suppose the statement is false and let $Y$ be a basis of $M_{A+e}$ with
  $w(Y)<w(X_A)+w(e)-w(e')$. Then $e\in Y$, and by the strong exchange property, there exists
  $e''\in X_A$ such that $Y+e''-e$ and $X_A+e-e''$ are bases. The choice of $e'$ implies
  $w(e'')\leqslant w(e')$, while minimality of $X_A$ and our assumption on $Y$ imply that
\begin{multline*}
w(X_A)\leqslant w(Y-e+e'')=w(Y)-w(e)+w(e'')\\
<w(X_A)+w(e)-w(e')-w(e)+w(e'')=w(X_A)-w(e')+w(e''),
\end{multline*}

hence $w(e'')>w(e')$, and this contradiction concludes the proof.
\end{proof}
\begin{proof}[Proof of Lemma~\ref{lem:eppstein}] Let $A=\{e_1,\ldots,e_k\}$, set $A_0=\emptyset$ and $A_i=\{e_1,\ldots,e_i\}$ for $i=1,\ldots,k$, and apply Lemma~\ref{lem:basis_update} with $A=A_i$, $e=e_{i+1}$ for $i=0,\ldots,k-1$.
\end{proof}

For $t=0,1,\ldots,T$, let
\[F_t=\min\left\{f(A)\ :\ A\subseteq E\setminus E_0,\ \lvert A\rvert=t\right\}.\] Note that
$F_0>F_1>\cdots>F_t=F_{t+1}=\cdots=F_T$ for some $t\leqslant\min\{r,T\}$ and
Algorithm~\ref{alg:greedy_1} terminates with $k\geqslant t$. Clearly, $F_0+F_1+\cdots+F_T$ is a
lower bound for~(\ref{eq:imst}). The correctness of Algorithm~\ref{alg:greedy_1} follows from the
fact that it achieves this lower bound, which in turn is a consequence of the following extension
property.
\begin{lemma}\label{lem:nestedness}
  Let $t$ be the unique index with $F_{t-1}>F_t=F_{t+1}$, let $k<t$, and let
  $A,B\subseteq E\setminus E_0$ satisfying the following conditions:
  \begin{enumerate}
  \item $\lvert A\rvert= k$ and $\lvert B\rvert=k+1$,  
  \item $A$ and $B$ are optimal solutions for the minimization problems definining $F_k$ and
    $F_{k+1}$, respectively, i.e., $f(A)=F_k$ and $f(B)=F_{k+1}$.
  \end{enumerate}
 Then there exists $e\in B\setminus A$ such that $f(A+e)=F_{k+1}$.
\end{lemma}
\begin{proof}
  Let $M_0$, $M_A$ and $M_B$ denote the submatroids induced by $E_0$, $E_0\cup A$ and $E_0\cup B$,
  respectively.  We have $\rk(M_0)=\rk(M_A)=\rk(M_B)=r$ because $E_0$ contains a basis of $M$.  By
  the optimality of $A$ and $B$ and since $F_{k+1}<F_k$, the sets $A$ and $B$ are contained in every
  minimum weight basis of $M_A$ and $M_B$, respectively, and by Lemma~\ref{lem:eppstein}, there are
  minimum weight bases $X$, $X_A$ and $X_B$ for these submatroids with $X_A\setminus X=A$ and
  $X_B\setminus X=B$. We define a bipartite digraph $(\mathcal U\cup\mathcal V,\mathcal A)$ with
  parts
\begin{align*}
\mathcal U &= X_A\setminus\cl((X_B\cap X)\cup A), & \mathcal V= X_B\setminus\cl((X_B\cap X)\cup A).
\end{align*}
Note that $\rk((X_B\cap X)\cup A)\leqslant\lvert(X_B\cap X)\cup A\rvert=r-1$, hence $\mathcal
U,\mathcal V\neq\emptyset$. Also, $\mathcal U\subseteq X\subseteq E_0$ and $\mathcal V\subseteq
B\subseteq E\setminus E_0$, hence $\mathcal U\cap\mathcal V=\emptyset$. The arc set is defined by
\begin{multline*}
\mathcal A = \left\{(e,e')\in\mathcal U\times\mathcal V\ :\ (e,e')\text{ is an exchange pair for }X_A\right\}\\
\cup\left\{(e,e')\in\mathcal V\times\mathcal U\ :\ (e,e')\text{ is an exchange pair for }X_B\right\}.
\end{multline*}
Let $e'\in\mathcal V$. Then $e'\in E\setminus(E_0\cup A)$, hence $e'\not\in X_A$ and $X_A+e'$
contains a circuit $C$. We claim that $C-e'\not\subseteq\cl((X_B\cap X)\cup A)$, and this implies
that there exist $e\in C\cap\mathcal U$, and consequently $(e,e')\in\mathcal A$. For the sake of
contradiction, suppose the claim is false and $C-e'\subseteq\cl((X_B\cap X)\cup A)$. From the fact
that $C$ is a circuit, it follows that
\[e'\in\cl(C-e')\subseteq\cl((X_B\cap X)\cup A)\] which is a contradiction to $e'\in\mathcal
V$. Similarly, if $e'\in\mathcal U$, then $e'\in X_A\setminus A\subseteq E_0$ and $e'\not\in X_B\cap
X$, which implies $e'\not\in X_B$, hence there exists a circuit $C$ in $X_B+e'$. As before, the
assumption that $C-e'$ is contained in $\cl((X_B\cap X)\cup A)$ leads to the contradiction
$e'\in\cl((X_B\cap X)\cup A)$. By this argument, for every $e'\in\mathcal U$ there exists an
$e\in\mathcal V$ with $(e,e')\in\mathcal A$. We conclude that every node in the digraph $(\mathcal
U\cup\mathcal V,\mathcal A)$ has positive indegree, thus the digraph contains a directed cycle, and
this implies that there are $e',e''\in\mathcal U$ and $e\in\mathcal V$ such that $(e',e)\in\mathcal
A$, $(e,e'')\in\mathcal A$, and $w(e')\geqslant w(e'')$.  From this we derive
\begin{multline*}
f(A+e)\leqslant w(X_A+e-e') =  f(A)+w(e)-w(e')\leqslant F_k+w(e)-w(e'')\\
\leqslant f(B-e)+w(e)-w(e'')\leqslant w(X_B-e+e'')+w(e)-w(e'')=w(X_B)=F_{k+1}.
\end{multline*}
The converse inequality $f(A+e)\geqslant F_{k+1}$ is obvious and this concludes the proof.
\end{proof}

\section{Run-time analysis}\label{sec:runtime}
A rough upper bound for the run-time of a naive implementation of Algorithm~\ref{alg:greedy_2} for
the IND-MST problem on a graph with $n$ vertices can be obtained as follows: The optimal exchange
pair in each step of the for-loop can be found in time $O(n^2)$ by running through $O(n)$ candidates
for $e'$ and then by determining the best partner $e$ for $e'$ in linear time. Since $Y\setminus X$
has size $O(n)$ this gives in total a run-time of $O(n^3)$ which dominates the time needed to find
the minimum spanning trees $X$ and $Y$. In the following, we provide a more thorough estimate for
the run-time.

In order to bound the time complexities of the problems IMD-MWB and IND-MST, we argue that the
initial basis $X$, the ultimate basis $Y$ and the list of exchange pairs $\mathcal E$ can be
determined simultaneously. The idea is to consider the elements of $E$ in order of nondecreasing
weights and to construct and maintain three independent sets $X$, $Y$ and $Z$ using the following
update rules:
\begin{enumerate}
\item An element $e\in E$ is added to $X$ if and only if $e\in E_0$ and the addition of $e$ does not
  create a circuit in $X$. Hence $X$ is an initial minimum weight basis when the algorithm
  terminates.
\item An element $e\in E$ is added to $Y$ if and only if the addition of $e$ does not create a
  circuit in $Y$. Hence $Y$ is an ultimate minimum weight basis when the algorithm terminates.
\item An element $e\in E$ is added to $Z$ if and only if it is added to $X$ or $Y$. We will prove
  the following fact: If an edge added to $Y$ does not create a circuit in $Y$ then it does not
  create a circuit in $Z$. By this fact, an edge e added to $Z$ can create a circuit $C$ in $Z$ only
  if it has been added to $X$. This implies $e \in E_0$ and $C$ must contain an element of
  $E\setminus E_0$ (otherwise $e$ would have created a circuit in $X$). In this situation, a maximum
  weight element $e'$ of $C\setminus E_0$ is removed from $Z$ to preserve the independence of
  $Z$. The pair $(e,e')$ is added to the set $\mathcal E$ of exchange pairs.
\end{enumerate}
To finish up, the set $\mathcal E$ of exchange pairs $(e,e')$ is ordered such that $w(e)-w(e')$ is
nonincreasing, and ties are broken in favor of the pair that was added to $\mathcal E$ last. More
precisely, we index $\mathcal E=\{(e_1,e'_1),\ldots,(e_k,e'_k)\}$, such that for every
$i\in\{1,\ldots,k-1\}$, we have either $w(e_{i})-w(e'_{i})>w(e_{i+1})-w(e'_{i+1})$, or
$w(e_{i+1})-w(e'_{i+1})=w(e_{i})-w(e'_{i})$ and the pair $(e_{i},e'_{i})$ is added to $\mathcal E$
after $(e_{i+1},e'_{i+1})$. A formal description can be found in Algorithm~\ref{alg:efficient}. For
the remainder of the section, let $k$ be the size of the set $\mathcal E$ returned by the algorithm.
\begin{algorithm}[htb]
\caption{Efficient solution of the problem IMD-MWB}\label{alg:efficient}
\begin{tabbing}
....\=....\=....\=....\=................... \kill \\
$X\leftarrow\emptyset$;\quad $Y\leftarrow\emptyset$;\quad $Z\leftarrow\emptyset$ \qquad // initialize three independent sets \\
$\mathcal E\leftarrow\emptyset$\qquad// initialize set of exchange pairs \\
\textbf{for} $e \in E$ \textbf{do} \qquad // in nondecreasing order of weight\\
\> \textbf{if} $e\in E_0$ \textbf{then}\\
\> \> \textbf{if} $X+e\in\mathcal I$ \textbf{then} \\
\> \> \> $X\leftarrow X+e$;\quad $Z\leftarrow Z+e$\\
\> \> \> \textbf{if} $Z$ contains a circuit $C$ \textbf{then} \\
\> \> \> \> $e'=\argmax\{w(e'')\ :\ e''\in C\setminus E_0\}$\\
\> \> \> \> $Z\leftarrow Z-e'$;\quad $\mathcal E\leftarrow\mathcal E\cup\{(e,e')\}$\\
\> \textbf{if} $Y+e\in\mathcal I$ \textbf{then} \\
\> \> \> $Y\leftarrow Y+e$;\quad $Z\leftarrow Z+e$\\
sort $\mathcal E$\\
return $X$ and $\mathcal E=\left\{(e_1,e'_1),\ldots,(e_k,e'_k)\right\}$
\end{tabbing}
\end{algorithm}

In order to show the correctness of Algorithm~\ref{alg:efficient}, we introduce some additional
notation. Let $X_0=X$ and $X_i=X_{i-1}-e_i+e'_i$ for $(e_i, e'_i) \in \mathcal E$,
$i=1,2,\ldots,k$. Let $m=\lvert E\rvert$. We say that $e$ has \emph{position $p$}, denoted by $\pos(e)=p$, if
$e$ is the element that is handled in the $p$-th iteration of the for-loop, $1 \leqslant p \leqslant
m$. Note that $\pos(e)<\pos(e')$ implies $w(e)\leqslant w(e')$.

Let $X^p$, $Y^p, Z^p$ denote the sets $X$, $Y$ and $Z$ after the
$p$-th iteration of the for-loop has been completed, $0 \leqslant p
\leqslant m$. It is obvious, that if $p\leqslant l$, then
\[X^p \subseteq X^l \qquad\text{and}\qquad Y^p \subseteq Y^l.\]

For $i=1,\ldots,k$, we denote the circuit that leads to the deletion of $e'_i$ from $Z$ by $C_i$. In
other words, if $\pos(e_i)=p$ then $C_i$ is the unique circuit in $Z^{p-1}+e_i$ and we have
$Z^p=Z^{p-1}+e_i-e_i'$. If $e$ is any element of $C_i$, then $\pos(e)\leqslant \pos(e_i)$ and hence
$w(e)\leqslant w(e_i)$. By the choice of $e_i'$ we have
\begin{equation}\label{C_i}
w(e)\leqslant w(e_i') \text{ for all } e \in C_i \setminus E_0.
\end{equation}
Let
\[H^p =\{e_i: i \in [k] \text{ and }\pos(e_i) \leqslant p\},
\qquad\text{and}\qquad
H'^p =\{e_i': i \in [k] \text{ and }\pos(e_i) \leqslant p\}.
\]
That is, the sets $H^p$ and $H'^p$ contain the edges involved in
exchanges occurring either before or when the edge in position $p$
is examined. More precisely, we have
\[(X^p \times Y^p) \cap \mathcal{E} = \{ (e_i, e'_i) : e_i \in H^p\mbox{ and } e'_i \in H'^p \}.\]

\begin{lemma}
\label{lm4}
We have for $0 \leqslant p \leqslant m$
\begin{enumerate}
\item $X^p \subseteq Z^p \subseteq X^p \cup Y^p$,
\item $\cl(Y^p)=\cl(Z^p)$,
\item $Y^p \setminus X^p \subseteq E \setminus E_0$,
\item $X^p,Y^p,Z^p \in {\mathcal I}$,
\item $X^p \setminus Y^p=H^p$,
\item $Y^p \setminus Z^p=H'^p$.
\end{enumerate}
\end{lemma}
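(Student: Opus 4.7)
The plan is to prove all six statements simultaneously by induction on $p$, with the base case $p=0$ immediate since $X^0=Y^0=Z^0=\emptyset$ and $H^0=H'^0=\emptyset$. For the inductive step, fix $e$ to be the element handled at position $p$ and consider the cases determined by Algorithm~\ref{alg:efficient}: (A) $e\notin E_0$; (B) $e\in E_0$ with $X^{p-1}+e\notin\mathcal I$; and (C) $e\in E_0$ with $X^{p-1}+e\in\mathcal I$, which splits further into (C1) $Z^{p-1}+e\in\mathcal I$, and (C2) $Z^{p-1}+e$ contains a (unique) circuit $C$.

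The driver of the argument will be the induction hypothesis for statement~2, $\cl(Y^{p-1})=\cl(Z^{p-1})$, combined with $X^{p-1}\subseteq Z^{p-1}$ from statement~1. This forces: in case (C1), $e\notin\cl(Z^{p-1})=\cl(Y^{p-1})$, so the $Y$-branch also fires and $e$ enters both $X^p$ and $Y^p$ without creating an exchange pair; in case (C2), $e\in\cl(Y^{p-1})$ and the $Y$-branch is idle, so $e\in X^p\setminus Y^p$ and the new pair $(e,e')$ is appended to $\mathcal E$. An analogous argument in case~(A) shows that $Y^{p-1}+e\in\mathcal I$ forces $Z^{p-1}+e\in\mathcal I$, so the $Y$-branch can extend $Z$ without losing independence; in case~(B), $e\in\cl(X^{p-1})\subseteq\cl(Y^{p-1})$ means the $Y$-branch is also idle and nothing changes at step $p$. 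Finally, in case (C2) one has $C\setminus E_0\ne\emptyset$, because by statements~1 and~3 at $p-1$, $Z^{p-1}\cap E_0\subseteq X^{p-1}$, whence $(Z^{p-1}+e)\cap E_0\subseteq X^{p-1}+e$ is independent and hence cannot contain $C$.

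Once the coupling is in hand I expect each statement to follow by a short check. Statement~1 holds because elements enter $Z$ only when they enter $X$ or $Y$, and the removed $e'\in C\setminus E_0$ lies outside $X\subseteq E_0$. Statement~4 for $Z^p$ is immediate in case (C2) from the fact that removing an element of a circuit restores independence, and in the other cases from $e\notin\cl(Z^{p-1})$. Statement~2 is preserved in case (C2) because $e\in\cl(Z^{p-1})$ gives $\cl(Z^{p-1}+e)=\cl(Z^{p-1})$ and $e'\in C$ gives $e'\in\cl((Z^{p-1}+e)-e')$. Statements~3, 5 and~6 are bookkeeping: $X\setminus Y$ and $Y\setminus Z$ grow exactly in case (C2), matched by the corresponding growth of $H^p$ and $H'^p$. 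The main obstacle I anticipate is the mutual dependence of statements~2, 3 and~4 within a single inductive step, which I would handle by ordering the verifications as 1, then 4, then 2, then 3, then 5 and~6, so that each appeal to the induction hypothesis stays clean.
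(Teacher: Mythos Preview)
Your proposal is correct and follows essentially the same inductive argument as the paper. The only cosmetic difference is that you split into cases according to which branches of the algorithm fire (conditions on $e$, $E_0$, and the independence tests), whereas the paper splits according to the outcome (which of $X$ and $Y$ actually grow); the two decompositions match up exactly, with your (C2), (A), (C1), (B) corresponding to the paper's Cases~1, 2, 3, and the ``nothing changes'' case. Your explicit verification that $C\setminus E_0\neq\emptyset$ in case~(C2) is a small addition: the paper asserts this informally before the lemma but does not spell it out inside the proof.
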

\begin{proof}
  We proceed by induction on $p$. The case $p=0$ is trivial.  Now consider the step $p-1 \rightarrow
  p$. From $X^{p-1} \subseteq Z^{p-1} \subseteq X^{p-1} \cup Y^{p-1}$ we obtain directly $X^p
  \subseteq Z^p \subseteq X^p \cup Y^p$ since an element is added to $Z$ iff it is added to $X$ or
  to $Y$ and an element is deleted from $Z$ only if this element does not belong to $E_0$, i.e., not
  to $X$.  Now we prove the other assertions. Let $e$ be the element with $\pos(e)=p$.

  If $X^p=X^{p-1}$ and $Y^p=Y^{p-1}$ then also $Z^p=Z^{p-1}$, $H^p=H^{p-1}$ and $H'^p=H'^{p-1}$ and
  the assertion follows from the induction hypothesis. So there are three main cases:

\begin{description}
\item[Case 1.] $X^p=X^{p-1}+e$ and $Y^p=Y^{p-1}$. Then $e \in \cl(Y^{p-1})$. By the induction
  hypothesis, $e \in \cl(Z^{p-1})$ and consequently $Z^p=(Z^{p-1}+e)-e'$ for some $e'$ in the unique
  circuit of $Z^{p-1}+e$ where $e' \subseteq E \setminus E_0$ and $e' \neq e$. Obviously, $Z^p \in
  {\mathcal I}$ and $\cl (Z^p)=\cl(Z^{p-1}+e)=\cl(Z^{p-1})=\cl(Y^{p-1})=\cl(Y^p)$. Obviously, $X^p$,
  $Y^p \in {\mathcal I}$ and $Y^{p}\setminus X^p \subseteq Y^{p-1} \setminus X^{p-1} \subseteq E
  \setminus E_0$. Finally, $X^p \setminus Y^p=(X^{p-1}+e)\setminus Y^{p-1}=(X^{p-1}\setminus
  Y^{p-1})+e=H^{p-1}+e=H^p$ and $Y^p \setminus Z^p=Y^{p-1}\setminus
  ((Z^{p-1}+e)-e')=(Y^{p-1}\setminus Z^{p-1})+e'=H'^{p-1}+e'=H'^p$.
\item[Case 2.] $X^p=X^{p-1}$ and  $Y^p=Y^{p-1}+e$. Then
$Z^p=Z^{p-1}+e$ and hence $\cl(Y^p)=\cl(Z^p)$. Obviously, $X^p$,
$Y^p \in {\mathcal I}$ and $e \notin \cl(Y^{p-1})$, i.e., $e \notin
\cl(Z^{p-1})$. Thus $Z^p \in {\mathcal I}$.

If $e \in E_0$ then $e \in \cl(X^{p-1}) \subseteq \cl(Z^{p-1})=\cl(Y^{p-1})$, which contradicts
$Y^{p-1}+e \in \mathcal{I}$. Hence $e \notin E_0$. Then $Y^{p}\setminus X^p = (Y^{p-1}+e) \setminus
X^{p-1} \subseteq E \setminus E_0$. Finally, $X^p \setminus Y^p=X^{p-1}\setminus
(Y^{p-1}+e)=X^{p-1}\setminus Y^{p-1}=H^{p-1}=H^p$ and $Y^p \setminus Z^p=Y^{p-1}\setminus
Z^{p-1}=H'^{p-1}=H'^p$.

\item[Case 3.] $X^p=X^{p-1}+e$ and $Y^p=Y^{p-1}+e$. Then $X^p$, $Y^p \in {\mathcal I}$ and $e \notin
  \cl(Y^{p-1})$, i.e., $e \notin \cl(Z^{p-1})$. Thus $Z^{p-1}+e \in {\mathcal I}$. This implies
  $Z^p=Z^{p-1}+e$ and consequently $\cl(Y^p)=\cl(Z^p)$ as well as $Y^{p}\setminus X^p = Y^{p-1}
  \setminus X^{p-1} \subseteq E \setminus E_0$. Finally, $X^p \setminus Y^p=X^{p-1} \setminus
  Y^{p-1}=H^{p-1}=H^p$ and $Y^p \setminus Z^p=Y^{p-1} \setminus Z^{p-1}=H'^{p-1}=H'^p$. \qedhere
\end{description}
\end{proof}

In the following, we denote the three independent sets that the algorithm terminates with by $X, Y$,
and $Z$, i.e., $X = X^m$, $Y = Y^m$ and $Z = Z^m$.

\begin{corollary}
\label{cor2}
We have
\begin{enumerate}
\item $X=Z$,
\item $X\setminus Y=\{e_1,\dots,e_k\}$,
\item $Y\setminus X=\{e_1',\dots,e_k'\}$.
\end{enumerate}
\end{corollary}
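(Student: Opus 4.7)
The plan is to read off all three assertions from Lemma~\ref{lm4} applied at $p=m$, using only the additional fact that $X$ and $Z$ have the same cardinality. Since $X^m, Y^m, Z^m$ are what we now call $X, Y, Z$, the sets $H^m = \{e_1,\ldots,e_k\}$ and $H'^m = \{e'_1,\ldots,e'_k\}$ are exactly the two coordinates of the exchange pairs returned by the algorithm.

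Assertion~(2) is immediate from item~5 of Lemma~\ref{lm4}: $X\setminus Y = H^m = \{e_1,\ldots,e_k\}$.

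For assertion~(1), I would argue that both $X$ and $Z$ are bases of $M$, and invoke $X\subseteq Z$ from item~1. The set $Y$ is a basis of $M$ (this is the point of update rule~2 in the description of Algorithm~\ref{alg:efficient}), so $\cl(Y) = E$; by item~2 of Lemma~\ref{lm4} this gives $\cl(Z) = E$, and combined with $Z\in\mathcal I$ from item~4 we conclude that $Z$ is a basis of $M$, hence $\lvert Z\rvert = r$. On the other hand, $X$ is constructed by the greedy rule restricted to $E_0$ and is therefore a basis of $M_0$; since $E_0$ contains a basis of $M$ we have $\rk(M_0) = r$, and thus $\lvert X\rvert = r$. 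Combining $X\subseteq Z$ with $\lvert X\rvert = \lvert Z\rvert$ yields $X = Z$.

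Assertion~(3) then follows by substituting (1) into item~6 of Lemma~\ref{lm4}: $Y\setminus X = Y\setminus Z = H'^m = \{e'_1,\ldots,e'_k\}$. There is no real obstacle in this proof; the structural work has already been absorbed into Lemma~\ref{lm4}, and the only ingredient that is not literally quoted from there is the cardinality argument $\lvert X\rvert = r = \lvert Z\rvert$ needed to upgrade the containment $X\subseteq Z$ to equality.
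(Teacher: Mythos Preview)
Your proof is correct and follows essentially the same route as the paper's: both read off assertions~(2) and~(3) directly from items~5 and~6 of Lemma~\ref{lm4} at $p=m$, and both derive~(1) from $X\subseteq Z$ together with $Z\in\mathcal I$ and the fact that $X$ is a basis. The only cosmetic difference is that the paper stops as soon as it knows $X$ is a basis and $Z\supseteq X$ is independent (so $X=Z$ by maximality), whereas you take the extra step of showing $Z$ is itself a basis via $\cl(Z)=\cl(Y)=E$; this is not needed, but it is harmless.
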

\proof By our general supposition that $E_0$ contains a basis, the set $X$ is a basis for
$M$. Since, by Lemma~\ref{lm4}, $Z$ is independent and $X \subseteq Z$, we have $X=Z$. Again, by
Lemma~\ref{lm4}, $X \setminus Y=X^m \setminus Y^m=H^m=\{e_1,\dots,e_k\}$ and $Y \setminus X=Y
\setminus Z = Y^m \setminus Z^m=H'^m=\{e_1',\dots,e_k'\}$. \qed

\begin{corollary}\label{cor3} 
  If $\pos(e_i)=p$, $i \in [k]$, then $C_i \cap H'^p=\{e_i'\}$.
\end{corollary}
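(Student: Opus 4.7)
The plan is to verify the two inclusions $\{e'_i\}\subseteq C_i\cap H'^p$ and $C_i\cap H'^p\subseteq\{e'_i\}$ separately. The first is immediate from the definitions: $e'_i\in C_i$ by the very rule that selects it as the maximum-weight element of $C_i\setminus E_0$, and $e'_i\in H'^p$ because $\pos(e_i)=p$. So the content lies entirely in the reverse inclusion.

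For the nontrivial direction, I would fix an arbitrary $j\in[k]$ with $j\ne i$ and $e'_j\in H'^p$, and argue $e'_j\notin C_i$. Since the for-loop of Algorithm~\ref{alg:efficient} processes each element of $E$ exactly once, and each exchange pair $(e_\ell,e'_\ell)\in\mathcal E$ is emitted during the iteration processing $e_\ell$, distinct indices force distinct first coordinates, so $e_j\ne e_i$ and hence $\pos(e_j)\ne p$. Combined with $\pos(e_j)\leqslant p$ from membership in $H'^p$, this gives $\pos(e_j)\leqslant p-1$, meaning that $e'_j$ has already been deleted from $Z$ by the time iteration $p$ begins.

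The central step is the observation that once an element leaves $Z$, it is never reinserted, because $Z$ is only touched in the iteration processing the currently considered element, and no element is processed twice. This yields $e'_j\notin Z^{p-1}$. Moreover $e'_j\in E\setminus E_0$ (as it was chosen from $C_j\setminus E_0$) while $e_i\in E_0$ (the exchange-pair branch executes only inside the ``if $e\in E_0$'' clause), so $e'_j\ne e_i$ and therefore $e'_j\notin Z^{p-1}+e_i$. Since $C_i$ is by definition contained in $Z^{p-1}+e_i$, it follows that $e'_j\notin C_i$, completing the proof.

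The main obstacle I anticipate is simply recording the ``once removed, never reinserted'' behaviour of $Z$: Lemma~\ref{lm4} only controls the symmetric differences $X^p\setminus Y^p$ and $Y^p\setminus Z^p$, not the individual history of a single element, so this point has to be read directly off the pseudocode of Algorithm~\ref{alg:efficient} rather than extracted from the invariants already established.
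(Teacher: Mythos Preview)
Your proof is correct and follows the same underlying idea as the paper's: both observe that $C_i\subseteq Z^{p-1}+e_i=Z^p+e'_i$ and that no other element of $H'^p$ can lie in this set. Your worry that Lemma~\ref{lm4} does not track the history of individual elements is misplaced, however: part~(6) says precisely that $H'^p=Y^p\setminus Z^p$, so every element of $H'^p$ is automatically outside $Z^p$, and the paper exploits this to compress the whole argument to one line,
\[
C_i\cap H'^p \;=\; C_i\cap(Y^p\setminus Z^p)\;\subseteq\;(Z^p+e'_i)\cap(Y^p\setminus Z^p)\;=\;\{e'_i\},
\]
with no need to revisit the pseudocode or reason separately about reinsertion.
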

\begin{proof}
  We have $C_i \subseteq Z^{p-1}+e_i=Z^p+e_i'$ and thus $C_i \cap H'^p=C_i \cap (Y^p \setminus Z^p)
  \subseteq (Z^p+e_i') \cap (Y^p \setminus Z^p)=\{e_i'\}$. Clearly, $e_i' \in C_i \cap H'^p$.
\end{proof}

\begin{lemma}\label{lm5} 
  Let $C$ be a circuit in $X \cup Y$. Let
  \[e^*=\argmax\{\pos(e): e \in C\}.\] 
Then $e^* = e_l$ for some $l \in [k]$.
\end{lemma}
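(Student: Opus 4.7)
The plan is to rule out the possibility that $e^*\in Y$, so that by Corollary~\ref{cor2} we will automatically have $e^*\in X\setminus Y=\{e_1,\dots,e_k\}$. Since $C\subseteq X\cup Y$ and $e^*\in C$, the only two cases are $e^*\in X\setminus Y$ (which is the desired conclusion) or $e^*\in Y$, and the goal is to derive a contradiction in the second case.

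Suppose, then, that $e^*$ is added to $Y$, and let $p=\pos(e^*)$. Because $e^*$ has the latest position among the elements of $C$, and because elements are never removed from $X$ or $Y$ once added, every element of $C-e^*$ lies in $X^{p-1}\cup Y^{p-1}$. The next step is to show $X^{p-1}\cup Y^{p-1}\subseteq\cl(Y^{p-1})$: by Lemma~\ref{lm4}(1), $X^{p-1}\subseteq Z^{p-1}\subseteq\cl(Z^{p-1})$, and by Lemma~\ref{lm4}(2), $\cl(Z^{p-1})=\cl(Y^{p-1})$; combined with $Y^{p-1}\subseteq\cl(Y^{p-1})$, this gives $C-e^*\subseteq\cl(Y^{p-1})$. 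Since $C$ is a circuit, $e^*\in\cl(C-e^*)\subseteq\cl(Y^{p-1})$, which means $Y^{p-1}+e^*\notin\mathcal{I}$, contradicting the assumption that $e^*$ was added to $Y$ at step $p$.

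There is no real obstacle here; the whole argument rests on the invariants already established in Lemma~\ref{lm4}. The only subtle point to be careful about is that both parts~(1) and~(2) of that lemma are needed to conclude $X^{p-1}\cup Y^{p-1}\subseteq\cl(Y^{p-1})$, and this closure containment is the engine that drives the contradiction.
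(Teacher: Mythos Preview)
Your proof is correct and follows essentially the same route as the paper's. Both arguments use Lemma~\ref{lm4}(1)--(2) to place $C-e^*$ inside $\cl(Y^{p-1})=\cl(Z^{p-1})$ and hence obtain $e^*\in\cl(Y^{p-1})$, which precludes $e^*$ from being added to $Y$; the only cosmetic difference is that you finish by invoking Corollary~\ref{cor2} ($X\setminus Y=\{e_1,\dots,e_k\}$), whereas the paper directly notes that $e^*\in\cl(Z^{p-1})$ forces a circuit in $Z^{p-1}+e^*$ and hence an exchange pair with first component $e^*$.
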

\begin{proof}
  Let $\pos(e^*)=q.$ Then $C-e^* \subseteq X^{q-1} \cup Y^{q-1}$. This implies $e^* \in \cl(X^{q-1}
  \cup Y^{q-1})$ and from Lemma~\ref{lm4} we obtain $e^* \in \cl(X^{q-1} \cup
  Z^{q-1})=\cl(Z^{q-1})$. Since $e^* \in X^q \cup Y^q$, but $e^* \notin X^{q-1} \cup Y^{q-1}$ we
  must have $X^q=X^{q-1}+e^*$ and $Z^q=Z^{q-1}+e^*-e'$ for some $e' \in E \setminus E_0$. Hence
  $e^*=e_l$ for some $l \in [k]$.
\end{proof}

\begin{lemma}\label{lm6} 
  We have $C_i \subseteq X \cup \{e_1',\dots,e_i'\}$ for every $i \in [k]$.
\end{lemma}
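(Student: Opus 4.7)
The plan is to fix $i\in[k]$, set $p=\pos(e_i)$, and exploit the fact that by construction $C_i\subseteq Z^{p-1}+e_i$. I would then classify each element of $C_i$ as lying either in $X$ or in $\{e_1',\dots,e_i'\}$. The element $e_i$ itself is added to $X$ at step $p$ and never removed, so $e_i\in X$. Every other $e\in C_i$ lies in $Z^{p-1}\subseteq X^{p-1}\cup Y^{p-1}$ by Lemma~\ref{lm4}(1), and the subcase $e\in X^{p-1}\subseteq X$ is immediate.

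The substantive subcase is $e\in Y^{p-1}\setminus X^{p-1}$. Here Lemma~\ref{lm4}(3) yields $e\in E\setminus E_0$, hence $e\notin X$, and Corollary~\ref{cor2} lets us write $e=e_j'$ for a unique $j\in[k]$. It remains to show $j\leqslant i$, which I would establish by splitting on $\pos(e_j)$. If $\pos(e_j)\leqslant p$, then $e_j'\in H'^p$ and Corollary~\ref{cor3} immediately forces $e_j'=e_i'$, giving $j=i$.

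The main obstacle is the remaining case $\pos(e_j)>p$, where $j<i$ must be extracted from the global sort that defines the indexing of $\mathcal E$. The plan here is to combine three ingredients: (i) $e_j'\in C_i\setminus E_0$ together with~(\ref{C_i}) gives $w(e_j')\leqslant w(e_i')$; (ii) nondecreasing processing order together with $\pos(e_j)>\pos(e_i)$ gives $w(e_j)\geqslant w(e_i)$; and (iii) $(e_j,e_j')$ is added to $\mathcal E$ strictly after $(e_i,e_i')$. Summing (i) and (ii) yields $w(e_j)-w(e_j')\geqslant w(e_i)-w(e_i')$, and the tie-breaking convention in the indexing of $\mathcal E$ (smaller index means strictly larger benefit, or equal benefit but later addition) combined with (iii) then forces $j<i$, completing the case analysis and the proof.
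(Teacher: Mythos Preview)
Your proof is correct and follows essentially the same route as the paper: both arguments reduce to showing that any $e_j'\in C_i$ with $j\neq i$ satisfies $\pos(e_j)>\pos(e_i)$, then combine $w(e_j)\geqslant w(e_i)$ with~(\ref{C_i}) and the sort order of $\mathcal E$ (including the tie-break) to force $j<i$. The only cosmetic difference is that the paper argues by contradiction from $j>i$ and states $\pos(e_j)>\pos(e_i)$ directly (since $e_j'\in Z^{p-1}$ has not yet been removed), whereas you organise it as a forward case split and invoke Corollary~\ref{cor3} for the case $\pos(e_j)\leqslant p$; that case is in fact vacuous except for $j=i$, so your use of Corollary~\ref{cor3} is sound but slightly more than needed.
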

\begin{proof}
  Using Corollary~\ref{cor2}, we obtain $C_i \subseteq X \cup Y=X \cup (Y \setminus X)= X \cup
  \{e_1',\dots,e_k'\}$. Assume that there is some $j>i$ such that $e_j' \in C_i$. Then
  $\pos(e_j)>\pos(e_i)$ and thus $w(e_j) \geqslant w(e_i)$. From~(\ref{C_i}) and $e_j'$ in
  $C_i\setminus E_0$, we obtain $w(e_i') \geqslant w(e_j')$. Consequently, $w(e_j)-w(e_j') \geqslant
  w(e_i)-w(e_i')$, a contradiction to the ordering of ${\mathcal E}$.
\end{proof}

\begin{lemma}
  For $i \in [k]$, the pair $(e_i,e_i')$ is an exchange pair for $(X_{i-1},Y)$.
\end{lemma}
\begin{proof}
  Clearly, $e_i' \in Y\setminus (X \cup \{e_1', \dots, e_{i-1}'\}) \subseteq Y \setminus
  X_{i-1}$. We have to show that $e_i$ lies in the unique circuit of $X_{i-1}+e_i'=(X \setminus
  \{e_1,\dots,e_{i-1}\}) \cup \{e_1',\dots,e_i'\}$. An equivalent statement is that there is a
  circuit in $(X \setminus \{e_1,\dots,e_{i-1}\}) \cup \{e_1',\dots,e_i'\}$ containing $e_i$.  From
  Lemma~\ref{lm6}, we know that there is at least a circuit in $X \cup \{e_1',\dots,e_i'\}$
  containing $e_i$, namely $C_i$. For a circuit $C$, let
\[\mu_C^i= \max\{\pos(e_j): j \in [i-1] \text{ and } e_j \in C\},\]
where the maximum extended over an empty set is defined to be $-\infty$.

Assume that there is no circuit in $(X \setminus\{e_1,\dots,e_{i-1}\}) \cup \{e_1',\dots,e_i'\}$
containing $e_i$. Then we choose a circuit $C$ in $X \cup \{e_1',\dots,e_i'\}$ containing $e_i$
such that $\mu_C^i$ is minimal. By our assumption, $\mu_C^i$ is finite and there exists an integer
$l \in [i-1]$ such that $e_l \in C$ and $\pos(e_l)=\mu_C^i$. The circuit $C_l$ also contains $e_l$
and is contained in $X \cup \{e_1',\dots,e_i'\}$ in view of Lemma~\ref{lm6} and $l < i$. Note that
$\pos(e)<\pos(e_l)$ for all $e \in C_l-e_l$. Now $C$ and $C_l$ are two distinct circuits
with $e_l\in C\cap C_l$, and therefore there is also a circuit $\widetilde{C} \subseteq C
\cup C_l - e_l$. Obviously, $\widetilde{C} \subseteq X \cup \{e_1',\dots,e_i'\}$ and
$\mu_{\widetilde{C}}^i < \mu_C^i$, a contradiction to the choice of $C$.
\end{proof}

\begin{lemma}
For $i \in [k]$ the pair $(e_i,e_i')$ is an optimal exchange pair for $(X_{i-1},Y)$.
\end{lemma}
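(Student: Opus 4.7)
The argument proceeds by contradiction. Suppose there is an exchange pair $(f,e_j')$ for $(X_{i-1},Y)$ with $w(f)-w(e_j')>w(e_i)-w(e_i')$. Because $e_j'\in Y\setminus X_{i-1}=\{e_i',\ldots,e_k'\}$ we have $j\ge i$. Replacing $f$ by the maximum-weight element of $C-e_j'$, where $C$ is the unique circuit in $X_{i-1}+e_j'$, can only increase $w(f)-w(e_j')$, so I assume $f$ has maximum weight there. Applying Lemma~\ref{lm5} to $C\subseteq X\cup Y$, the maximum-position element of $C$ is some $e_l\in X\setminus Y$; since $e_l\ne e_j'$ and maximum position coincides with maximum weight along $C$, this element is $f=e_l$, and $e_l\in X_{i-1}$ forces $l\ge i$ in the sorted order. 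If $l=j$, the sorting of $\mathcal E$ gives $w(f)-w(e_j')=w(e_j)-w(e_j')\le w(e_i)-w(e_i')$, the desired contradiction; so assume $l\ne j$.

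Combining the hypothesis with $w(e_l)-w(e_l')\le w(e_i)-w(e_i')$ (sorting of $\mathcal E$ and $l\ge i$) forces both $w(e_l)>w(e_j)$ and $w(e_l')>w(e_j')$, hence $\pos(e_l)>\pos(e_j)$ and $\pos(e_l')>\pos(e_j')$. Since $e_j'$ is removed from $Z$ at step $\pos(e_j)<\pos(e_l)$ we have $e_j'\notin Z^{\pos(e_l)-1}$, and from $C_l-e_l\subseteq Z^{\pos(e_l)-1}$ we conclude $e_j'\notin C_l$. Circuit elimination applied to $C$ and $C_l$ at the common element $e_l$, with $e_j'\in C\setminus C_l$, then produces a circuit $C^{(1)}\subseteq(C\cup C_l)-e_l$ that still contains $e_j'$ and whose maximum position is strictly less than $\pos(e_l)$.

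The plan is to iterate this descent: Lemma~\ref{lm5} identifies the next maximum-position element $e_{l^{(s)}}$, and provided $\pos(e_{l^{(s)}})>\pos(e_j)$ the same positional argument gives $e_j'\notin C_{l^{(s)}}$, so a further circuit elimination via $C_{l^{(s)}}$ is available. The maximum position strictly decreases, so the process terminates in a circuit that contains $e_j'$ together with a maximum-position element $e_{l^*}$ with $\pos(e_{l^*})\le\pos(e_j)$. If $l^*=j$, uniqueness of the circuit in $Z^{\pos(e_j)-1}+e_j$ (using independence of $Z^{\pos(e_j)-1}$ from Lemma~\ref{lm4}) identifies the terminal circuit with $C_j$; the chain of eliminations then traces back from $C_j$ to $C$ through pairs in $\mathcal E$ with positions exceeding $\pos(e_j)$, forcing, via Corollary~\ref{cor3} and Lemma~\ref{lm6}, an inconsistency with the placement of $e_l$. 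If instead $\pos(e_{l^*})<\pos(e_j)$, the terminal circuit lies in $X^{\pos(e_j)-1}\cup Y^{\pos(e_j)-1}$ and must contain an $e_s'$ previously removed from $Z$, which by Corollary~\ref{cor3} forces a pair $(e_s,e_s')$ contradicting the sorted ordering. The hard part is this terminal step: showing that the chain of circuit eliminations, combined with the invariants of Lemma~\ref{lm4} and Corollary~\ref{cor3}, leaves no consistent way for such a pair $(e_l,e_j')$ with $l\ne j$ to have existed.
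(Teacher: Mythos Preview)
Your setup matches the paper's: identifying $\hat e'=e_j'$ with $j\ge i$, passing to the maximum-position element $e_l$ of the circuit $\widehat C\subseteq X_{i-1}+e_j'$ via Lemma~\ref{lm5}, obtaining $l\ge i$, and deriving $w(e_l')>w(e_j')$ and $\pos(e_l)>\pos(e_j)$. The idea of repeatedly eliminating the current max-position element $e_{l^{(s)}}$ against $C_{l^{(s)}}$ while retaining $e_j'$ is natural. But the terminal analysis is not a proof; it is a description of what you hope will happen, and as written it does not go through.

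Concretely: in the case $l^*=j$ you assert that the terminal circuit equals $C_j$ by uniqueness in $Z^{\pos(e_j)-1}+e_j$. This requires the terminal circuit to lie in $Z^{\pos(e_j)-1}+e_j$, which you have not established. Each elimination step $C^{(s+1)}\subseteq (C^{(s)}\cup C_{l^{(s)}})-e_{l^{(s)}}$ can introduce the element $e_{l^{(s)}}'$ (the unique element of $C_{l^{(s)}}\cap H'^{\pos(e_{l^{(s)}})}$ by Corollary~\ref{cor3}), and these removed elements accumulate; there is no reason the terminal circuit avoids $H'^{\pos(e_j)-1}$. The same issue undermines the case $\pos(e_{l^*})<\pos(e_j)$: you say some $e_s'$ ``previously removed from $Z$'' appears and that this contradicts the ordering, but you give no inequality linking $w(e_s)-w(e_s')$ to $w(e_i)-w(e_i')$. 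Your own final sentence concedes that this step is the hard part and is not done.

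The paper closes exactly this gap by reorganising the descent into a single extremal choice. Writing $p=\pos(e_l)$, it defines $\alpha_C^p=\max\{w(e_h'):e_h'\in C,\ \pos(e_h)\le p\}$ and $\nu_C=\min\{\pos(e_h):e_h'\in C\}$, calls a circuit \emph{$p$-majorized by $\alpha$} if every such $w(e_h')\le\alpha$, and then selects among circuits $C^*\subseteq X^p\cup Y^p$ that contain $e_l$ and are $p$-majorized by $\alpha_{\widehat C}^p$ one with \emph{maximal} $\nu_{C^*}$. A single circuit elimination with $C_q$ (where $\pos(e_q)=\nu_{C^*}$) shows $\nu_{C^*}=p$, hence $e_l'\in C^*$ and $w(e_l')\le\alpha_{\widehat C}^p=w(e_g')$ for some $g\le i-1$; comparing $(e_l,e_l')$ with $(e_g,e_g')$ in the sorted order then yields $l\le g<l$. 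The majorisation invariant is precisely what controls the ``accumulated $e_h'$'' that your iterative scheme leaves uncontrolled.
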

\begin{proof}
  Assume that $(e_i,e_i')$ is not optimal. Then there is a better exchange pair
  $(\hat{e},\,\hat{e}')$ for $(X_{i-1},Y)$. We have $\hat{e}' \in Y \setminus X_{i-1}=(Y \setminus
  (X \cup \{e_1',\dots,e_{i-1}'\})) \cup (Y \cap \{e_1,\dots,e_{i-1}\})$. From Corollary~\ref{cor2}
  it follows that $\hat{e}'=e_j'$ for some $j \in [k]$. Since $e_1',\dots,e_{i-1}'\notin Y \setminus
  X_{i-1}$,
\begin{equation}\label{ji}
j \geqslant i.
\end{equation}

Let $\widehat{C}$ be the unique circuit in $X_{i-1}+e_j'$ (containing $\hat{e}$) and let
$e^*=\argmax\{\pos(e): e \in \widehat{C}\}$. Then $w(e^*) \geqslant w(\hat{e})$. By Lemma~\ref{lm5},
$e^*=e_l$ for some $l \in [k]$. In particular, $e^* \neq e_j'$ and thus $(e^*,e_j')$ is also an
exchange pair for $(X_{i-1},Y)$. Since $e_1,\dots,e_{i-1} \notin X_{i-1}$,
\begin{equation}
\label{li}
l \geqslant i.
\end{equation}
We have (with $e^*=e_l$) $w(e_l)-w(e_j') \geqslant w(\hat{e})-w(e_j')$ and hence
\begin{equation}
\label{lj}
w(e_l)-w(e_j') > w(e_i)-w(e_i').
\end{equation}
By the ordering of $\mathcal E$ and by~(\ref{li}),
\begin{equation}
\label{il}
w(e_i)-w(e_i') \geqslant  w(e_l)-w(e_l').
\end{equation}
The inequalities~(\ref{lj}) and~(\ref{il}) imply
\begin{equation}\label{jl}
w(e_j') < w(e_l').
\end{equation}

Let $p=\pos(e_l)$. Then $\pos(e_j) < p$, because otherwise $w(e_j) \geqslant w(e_l)$ and hence, in
view of~(\ref{ji}) and the ordering of $\mathcal{E}$,
\[w(e_l)-w(e_j')\leqslant w(e_j)-w(e_j') \leqslant w(e_i)-w(e_i'),\] 
a contradiction to~(\ref{lj}). Moreover, $\widehat{C} \subseteq X^p \cup Y^p$ since $e_l$ has
maximal position in $\widehat{C}$. For a circuit $C$ let
\begin{align*}
\alpha_C^p&=\max\{w(e_h'): h \in [k], e_h' \in C  \text{ and } \pos(e_h) \leqslant p\},\\
\nu_C &= \min\{\pos(e_h): h \in [k] \text{ and } e_h' \in C\},
\end{align*}
where the maximum (resp. minimum) extended over an empty set is defined to be $-\infty$
(resp. $\infty$). For $\widehat{C}$ these values are finite since $e_j' \in \widehat{C}$ and
$\pos(e_j) < p$.  Hence there is an integer $g \in [k]$ such that $e_g' \in \widehat{C}, \pos(e_g)
\leqslant p$ and $w(e_g')=\alpha_{\widehat{C}}^p$\,. Note that
\begin{equation}\label{gi}
g \leqslant i-1  \text{ or } g=j
\end{equation}
since $\widehat{C} \subseteq X \cup \{e_1',\dots,e_{i-1}',e_j'\}$.  We say that a circuit $C$ is
\emph{$p$-majorized} by a number $\alpha$ if
\[w(e_h') \leqslant \alpha \quad \text{for all }h \in [k] \text{ with }e_h' \in C \text{ and }
\pos(e_h) \leqslant p.\] 
Note that $\widehat{C}$ is $p$-majorized by $\alpha_{\widehat{C}}^p$. Now
choose a circuit $C^*$ in $X^p \cup Y^p$ that contains $e_l$, is $p$-majorized by
$\alpha_{\widehat{C}}^p$ and has maximal $\nu$-value.

Note that, in view of Lemma~\ref{lm4} (parts 1 and 6), $C^* \subseteq X^p \cup Y^p \subseteq Z^p \cup Y^p = Z^p \cup
(Y^p \setminus Z^p)= Z^p \cup H'^p$. We have $\nu_{C^*} \leqslant p$ because otherwise $C^* \cap
H'^p= \emptyset$ and $C^*$ would be a circuit in $Z^p$, contradicting Lemma~\ref{lm4} (part 4).

Assume that $\nu_{C^*} < p$. Then choose $q \in [k]$ such that $e_q' \in C^*$ and
$\pos(e_q)=\nu_{C^*}$ and consider the circuit $C_q$.  We have $C_q \subseteq X^p \cup Y^p$ because
of $\pos(e_q) < p$.  Moreover, $w(e_h') \leqslant w(e_q')$ and $\pos(e_h) \geqslant \pos(e_q)$ for
all $h \in [k]$ with $e_h' \in C_q$ by the choice of $e_q'$ in $C_q$ and Corollary~\ref{cor3}. Since
$C^*$ is $p$-majorized by $\alpha_{\widehat{C}}^p$, in particular $w(e_q') \leqslant
\alpha_{\widehat{C}}^p$ and thus also $C_q$ is $p$-majorized by $\alpha_{\widehat{C}}^p$. By
definition, we have $C_q\subseteq Z^{p'-1}+e_q$ for $p'=\pos(e_q)<p=\pos(e_l)$, and therefore
$e_l\not\in C_q$. From $e'_q\in C^*\cap C_q$ and $e_l\in C^*\setminus C_q$ it follows that there is a circuit
$\widetilde{C} \subseteq C^* \cup C_q - e_q'$ containing $e_l$. Clearly, $\widetilde{C} \subseteq
X^p \cup Y^p$ and $\widetilde{C}$ is $p$-majorized by $\alpha_{\widehat{C}}^p$.  Obviously,
$\min\{\pos(e_h): h \in [k] \text{ and } e_h' \in C_q \cup C^*\}=\pos(e_q)$. Thus
$\nu_{\widetilde{C}}> \nu_{C^*}$, a contradiction to the choice of $C^*$.

Consequently, $\nu_{C^*}=p$. Since $e_l$ is the (unique) element of position $p$, necessarily $e_l'
\in C^*$. Because $C^*$ is $p$-majorized by $\alpha_{\widehat{C}}^p$, in particular $w(e_l')
\leqslant \alpha_{\widehat{C}}^p=w(e_g')$. From~(\ref{jl}) we obtain $g\neq j$, hence by~(\ref{gi})
and~(\ref{li}),
\begin{equation}\label{eq:g_less_l}
g \leqslant i-1 < l.
\end{equation}
On the other hand, the relation $\pos(e_g) \leqslant \pos(e_l)$ implies $w(e_g) \leqslant
w(e_l)$. Consequently,
\[w(e_l)-w(e_l')\geqslant w(e_g)-w(e_g'),\] 
and then the ordering of $\mathcal E$ implies $l\leqslant g$, which
contradicts~(\ref{eq:g_less_l}). Thus our first assumption was false, and the
proof is complete.
\end{proof}

For the runtime analysis, we assume that Algorithm~\ref{alg:efficient} gets as input the elements of
$E$ in nondecreasing order of weights, and that we have an algorithm which, for a given set
$X\subseteq E$, decides if $X\in\mathcal I$. Let $A$ denote an upper bound for the runtime of this
independence test. In each iteration of the for-loop there are three independence tests, and at most
one search for the element $e'\in Z$ to be removed. Since the final $X$ has $r$ elements, this
latter step is necessary at most $r$ times and can be done in time $O(A\lvert Z\rvert)=O(Ar)$, where
$r=\rk(M)$ is the rank of the matroid: just call the independence test for each of the sets $Z-e'$
in decreasing order of $w(e')$ until you find an independent set. So the for-loop terminates in time
$O(A(\lvert E\rvert+r^2))$. The final sorting of $\mathcal E$ takes time $O(r\log r)$. To summarize,
we have proved the following bound for the time complexity of the problem IMD-MWB.
\begin{theorem}\label{thm:complexity_IMD}
  For a matroid $M=(E,\mathcal I)$ of rank $r$, where the elements of $E$ are given in nondecreasing
  order of weights, the problem IMD-MWB can be solved in time $O(A(\lvert E\rvert+r^2))$, where $A$
  is a runtime bound for an algorithm that decides the independence of a set $X\subseteq E$.
\end{theorem}
Finally, we consider the special case where $M$ is a graphical matroid to solve our original problem
IND-MST.
\begin{theorem}\label{thm:complexity_IND}
  The problem IND-MST for a graph with $n$ vertices and $m$ edges can be solved in time
\[O(m+n\log n).\]
\end{theorem}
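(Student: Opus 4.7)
The plan is to apply Theorem~\ref{thm:complexity_IMD} with $M$ chosen to be the graphical matroid of $G=(V,E)$, since IND-MST is the special case of IMD-MWB in which $M$ arises from a graph. Here $|E|=m$ and, because $(V,E_0)$ is connected, $r=\rk(M)=n-1$, so the general bound $O(\max\{|E|+r^2,\,|E|\log|E|\})$ specializes to $O(\max\{n^2,\,m\log m\})$. What remains is to verify that the independence-oracle calls and circuit searches used inside Algorithm~\ref{alg:efficient} can in fact be carried out within this budget in the graphical setting, where no oracle is supplied externally.

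First I would sort the edges of $E$ in nondecreasing weight order in time $O(m\log m)$ and feed them to Algorithm~\ref{alg:efficient}. To support the three independence tests per iteration I maintain three union--find data structures over $V$, one for each of $X$, $Y$, $Z$. Testing whether $F+e$ is independent in a graphical matroid is equivalent to checking that the endpoints of $e$ lie in different components of $(V,F)$, and adding $e$ is a single union operation; both cost $O(\alpha(n))$ amortized. Over the whole loop this yields $O(m\,\alpha(n))=O(m)$ total work for independence tests.

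For the circuit search I also store $Z$ explicitly as an adjacency list. When a newly added edge $e\in E_0$ closes the unique cycle $C$ in $Z+e$, that cycle is recovered in time $O(|Z|)=O(n)$ by a depth-first search between the endpoints of $e$ inside the forest $Z$, and the maximum weight element $e'\in C\setminus E_0$ is identified during the same traversal. Such a search is executed at most once per new exchange pair, and at most $r=n-1$ pairs are ever added to $\mathcal E$, so the cumulative cost of all circuit searches is $O(n^2)$.

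The main obstacle I anticipate is precisely this bound on the number of circuit searches; but since each one is triggered by the creation of a fresh pair in $\mathcal E$, the count $|\mathcal E|\leqslant r=n-1$ yields the desired $O(n^2)$ contribution without further work. Adding the $O(n\log n)$ time needed to sort $\mathcal E$ by $w(e)-w(e')$ at the end, the total run time amounts to $O(m\log m)+O(m)+O(n^2)+O(n\log n)=O(\max\{n^2,\,m\log m\})$, which is the claimed bound.
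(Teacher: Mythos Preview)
Your proof is correct and follows essentially the same approach as the paper: sort the edges, run Algorithm~\ref{alg:efficient} with union--find for the independence tests, and use a DFS on the forest $Z$ to locate each cycle and its heaviest non-$E_0$ edge, bounding the number of such searches by $|\mathcal E|\leqslant n-1$. One small point you gloss over (and the paper does too) is that the union--find structure for $Z$ cannot literally track the components of $Z$ once edges are deleted; it works anyway because removing a circuit element leaves $\cl(Z)$ unchanged, so never undoing unions still gives the correct connectivity test---you may want to state this explicitly.
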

\begin{proof}
  Using Fibonacci heaps~\cite{Fredman.Tarjan_1987_Fibonacciheapsand}, minimum spanning trees $X$ and $Y$ for the graphs $G=(V,E)$ and
  $G_0=(V,E_0)$, respectively, can be constructed in time $O(m+n\log
  n)$. We can then run Algorithm~\ref{alg:efficient} on
  the graph $G=(V,X\cup Y)$ which has only $O(n)$ edges. Using dynamic
  trees~\cite{Sleator.EndreTarjan_1983_datastructuredynamic} to represent $Z$, each of the exchange
  pairs $(e,e')$ can be found in time $O(\log n)$: if $e=\{u,v\}$ is the edge that creates a circuit in
  $Z$ then $e'$ is an edge of maximum weight on the path between $u$ and $v$ in $Z$. 
\end{proof}

\medskip

\textbf{Acknowledgment.} We thank the anonymous referees for valuable comments that helped improve
and clarify the presentation of the results.


\end{document}